\theoremstyle{plain}
    \newtheorem{thm}{Theorem}[section]
    \newtheorem{corollary}[thm]{Corollary}
    \newtheorem{lemma}[thm]{Lemma}
    \newtheorem{question}[thm]{Question}
    \newtheorem{theorem}[thm]{Theorem}
\theoremstyle{definition}
    \newtheorem{notation}[thm]{Notation}
    \newtheorem*{notation*}{Notation and Terminology}
    \newtheorem{remark}[thm]{Remark}
\theoremstyle{remark}
\newcommand{\C}{\mathbb{C}}
\newcommand{\PP}{\mathbb{P}}
\newcommand{\Q}{\mathbb{Q}}
\newcommand{\R}{\mathbb{R}}
\newcommand{\Aut}{\operatorname{Aut}}
\newcommand{\Bir}{\operatorname{Bir}}
\newcommand{\Bim}{\operatorname{Bim}}
\newcommand{\GL}{\operatorname{GL}}
\newcommand{\id}{\operatorname{id}}
\newcommand{\Ker}{\operatorname{Ker}}
\newcommand{\Sing}{\operatorname{Sing}}
\newcommand{\mstriangle}[1]{
\begin{tikzpicture}[x=0.3cm,y=0.3cm]
\draw (-0.4,-0.433) -- (1.4,-0.433);
\draw (-0.2,-0.7794) -- (0.7,0.7794);
\draw (1.2,-0.7794) -- (0.3,0.7794);
\end{tikzpicture}
}
\newcommand{\mssharp}[1]{
\begin{tikzpicture}[x=0.3cm,y=0.3cm]
\draw (-0.8,-0.5) -- (0.8,-0.5);
\draw (-0.8,0.5) -- (0.8,0.5);
\draw (-0.5,-0.8) -- (-0.5,0.8);
\draw (0.5,-0.8) -- (0.5,0.8);
\end{tikzpicture}
}
\newcommand{\Rmnum}[1]{\expandafter\@slowromancap\romannumeral #1@}
\begin{document}

\title[Jordan property]
{Jordan property for automorphism groups of compact spaces in Fujiki's class $\mathcal{C}$}

\author{Sheng Meng, Fabio Perroni, De-Qi Zhang}

\address{Korea Institute For Advanced Study,
Seoul 02455, Republic of Korea}
\email{ms@u.nus.edu, shengmeng@kias.re.kr}

\address{Universit\`a  degli Studi di Trieste, 34127 Trieste, Italy
}
\email{fperroni@units.it}

\address
{
\textsc{National University of Singapore,
Singapore 119076, Republic of Singapore
}}
\email{matzdq@nus.edu.sg}

\begin{abstract}
Let $X$ be a compact complex space in Fujiki's Class $\mathcal{C}$.
We show that the group $\Aut(X)$ of all biholomorphic automorphisms of $X$ has the Jordan property: there is a (Jordan) constant $J = J(X)$ such that any finite subgroup $G\le \Aut(X)$
has an abelian subgroup $H\le G$ with the index $[G:H]\le J$.
This extends, with a quite different method, the result of Prokhorov and Shramov for Moishezon threefolds.
\end{abstract}

\subjclass[2010]{
14J50, 
32M05. 
}

\keywords{Jordan property, strongly Jordan, automorphism group, Moishezon manifold, Fujiki's class $\mathcal{C}$}

\maketitle
\tableofcontents

\section{Introduction}

We work over the field $\C$ of complex numbers.

It all began with a famous result, proved by Camille Jordan in the year 1878, asserting that for any field $k$ of characteristic $0$ and any positive integer $n$, the (linear) automorphism group $\GL_n(k)$ of an $n$-dimensional vector space has the {\it Jordan property}:
there is a Jordan constant $J = J(n)$ such that every finite subgroup $H \le \GL_n(k)$
has an abelian subgroup $H_1$ of index $[H : H_1] \le J(n)$.

People then wondered whether the same Jordan property is shared by other automorphism groups, for instance, (not necessarily linear) general automorphism groups or even birational automorphism groups of varieties.

More than a century having passed and only recently we achieved a quite well understanding 
of the finite subgroups of the  Cremona group 
of rank $2$, ${\rm Cr}(k)=\Bir(\mathbb{P}_k^2)$.  
In particular, if ${\rm char}(k)=0$, the work of Jean-Pierre Serre  \cite{Ser09} yields (among other results) an  
explicit bound for the order of any finite subgroup of ${\rm Cr}(k)$ 
and it implies that ${\rm Cr}(k)$ has the Jordan property.
Soon after, Popov asked whether the group $\Aut(X)$ (resp. $\Bir(X)$) of all automorphisms (resp. all birational automorphisms) of an algebraic variety $X$ is Jordan
(cf.~\cite[Question 2.30-2.31]{Pop11}).
Popov himself proved that for a projective surface $X$, the group $\Bir(X)$ is Jordan unless
$X$ is birational to the product $\PP^1 \times E$ with $E$ an elliptic curve. Late on Zarhin confirmed that
$\Bir(\PP^1 \times E)$ is not Jordan while $\Aut(X)$ is still Jordan when $X$ is projective and birational to $\PP^1 \times E$ (cf.~\cite[\S 2.2]{Pop11}, \cite[Theorem 5.3]{Ser09}, \cite[Theorem 1.2]{Zar14}, \cite[Theorem 1.3]{Zar15}).

For quasi-projective varieties, Bandman and Zarhin proved that $\Aut(X)$ is Jordan when $\dim(X)=2$ or $X$ is birational to the product $\PP^1 \times A$ with $A$ having no rational curve
(cf.~\cite[Theorem 1.7]{BZ15}, \cite[Theorem 4]{BZ19}).

For algebraic varieties of higher dimensions, with the help of the minimal model program, Prokhorov and Shramov \cite[Theorem 1.8]{PS14} confirmed the Jordan property of the group $\Bir(X)$ for any algebraic variety $X$, assuming either $X$ is non-uniruled,
or $X$ has vanishing irregularity as well as the (then) outstanding
Borisov-Alexeev-Borisov conjecture about the bounded-ness of terminal Fano varieties
which has now been affirmatively confirmed by Birkar \cite[Theorem 1.1]{Bir}.
In particular, the Cremona groups have the Jordan property, confirming a conjecture of Serre.

The first and third authors proved that $\Aut(X)$ is Jordan when $X$ is a projective variety (cf.~\cite{MZ18}).
This result is extended to compact normal K\"ahler spaces by J.~Kim \cite{Kim18}, while Popov offered a much simpler proof by reducing the Jordan property to (real) Lie groups; see \cite[Theorem 5]{Pop18} and also Theorem \ref{thm-kim-popov}.

In non-algebraic cases, compact complex surfaces still behave well. Indeed, Prokhorov and Shramov \cite[Theorems 1.6 and 1.7]{PS} showed that $\Aut(X)$ (and even the group $\Bim(X)$ of all bimeromorphic automorphisms of $X$) are Jordan for any non-projective compact complex surface $X$.

However, one cannot generalize Popov's question further to the settings of non-compact complex manifolds, or diffeomorphism groups of compact Riemmannian manifolds.
We refer to \cite{CPS}, \cite{Pop18} and \cite{Zar19} for the counter examples; see also \cite{BZ17}, \cite{BZ20} and \cite{Mun19} for positive cases.

In higher dimensions, it remains unknown whether the biholomorphic or biregular automorphism group $\Aut(X)$ is Jordan for any {\it compact} complex manifold $X$ or {\it non-projective} algebraic variety $X$, respectively.

We refer to Mundet i Riera \cite[\S 1]{Mun19} for an excellent survey of more related results.

As in \cite{BZ19} and \cite{PS19},
a group $G$ is called {\it strongly Jordan} if $G$ is Jordan and if there is a constant $N = N(G)$ such that any finite abelian subgroup of $G$ is generated by at most $N$ elements.
By \cite[Theorem 2.5]{MS63}, the group $\Aut(X)$ being Jordan automatically implies that it is strongly Jordan for any compact complex manifold $X$ (or any compact complex space by taking an equivariant resolution which is a composition of blowups (cf.~\cite[Theorem 13.2]{BM97}).


Theorem \ref{main-thm} below is our main result. 
The assumption of $X$ being smooth can be weakened to being irreducible  by taking an equivariant resolution.
Recall that a compact reduced complex space is said to be in {\it Fujiki's class $\mathcal{C}$} if it is the meromorphic image of a compact K\"ahler manifold, or equivalently it is bimeromorphic to a compact K\"ahler manifold.
We refer to \cite[Definition 1.1 and Lemma 1.1]{Fuj78}, \cite[Chapter IV, Theorem 5]{Var89} and \cite[Theorem 0.7]{DP04} for equivalent definitions and some properties of Fujiki's class $\mathcal{C}$.

\begin{theorem}\label{main-thm}
Let $X$ be a connected complex manifold and let $Z$ be a non-empty compact complex subspace in Fujiki's class $\mathcal{C}$.
Then the automorphism group $\Aut(X, Z)$ of all biholomorphic automorphisms of $X$ preserving $Z$ is strongly Jordan.
\end{theorem}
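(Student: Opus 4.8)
The plan is to show that every finite subgroup of $\Aut(X,Z)$ embeds into a single finite-dimensional complex Lie group with a controlled component group, and then to run a Lie-theoretic Jordan argument in the spirit of Popov and of Theorem~\ref{thm-kim-popov}. Let $I=I_Z\subseteq\OO_X$ be the ideal sheaf of $Z$ and let $Z^{(1)}=(Z,\OO_X/I^2)$ be the first infinitesimal neighborhood, a compact complex space with reduction $Z$. Every $g\in\Aut(X,Z)$ preserves $I$ and hence induces an automorphism of $Z^{(1)}$, giving a homomorphism $r\colon\Aut(X,Z)\to\Aut(Z^{(1)})=:\mathcal{L}$. The first step is the key local lemma: $r$ is injective on every finite subgroup. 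Indeed, if $g$ has finite order and $r(g)=\id$, then $g^{*}$ is the identity on $\OO_X/I^2$, so $g|_Z=\id$ and $g^{*}$ is trivial on the conormal sheaf $I/I^2$; since the associated graded algebra $\bigoplus_k I^k/I^{k+1}$ is generated in degree one, $g^{*}$ is trivial on it, whence $g^{*}-\id$ is nilpotent on each $\OO_X/I^{n+1}$. As $g$ has finite order and we work in characteristic zero, a nilpotent operator of finite order is the identity, so $g^{*}=\id$ on the formal completion of $\OO_X$ along $Z$; thus $g=\id$ near $Z$ and, $X$ being connected, the identity theorem forces $g=\id$. It therefore suffices to prove that $\mathcal{L}$ is strongly Jordan.

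Since $Z^{(1)}$ is a compact complex space, $\mathcal{L}=\Aut(Z^{(1)})$ is a finite-dimensional complex Lie group (Bochner--Montgomery/Kaup). The underlying topological space of $Z^{(1)}$ is $Z$, so $\mathcal{L}$ acts on $H^{*}(Z,\Z)$, producing $\Phi\colon\mathcal{L}\to\GL_m(\Z)$ with $m=\dim_{\Q}H^{*}(Z,\Q)$; by Minkowski's theorem the image $\Phi(\mathcal{L})$ has a uniform bound on the orders of its finite subgroups. It remains to show that $\mathcal{L}_{[1]}:=\ker\Phi$ is Jordan, and for this I will prove that it has only finitely many connected components. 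Reduction to $Z$ gives a homomorphism $\mathcal{L}_{[1]}\to\Aut(Z)$ whose image lies in the subgroup $\Aut_{[1]}(Z)$ of automorphisms acting trivially on cohomology; by the Fujiki--Lieberman theorem for Fujiki's class $\mathcal{C}$ (see \cite{Fuj78}), $\Aut_{[1]}(Z)$ is a finite extension of the connected group $\Aut^{0}(Z)$, so this image has finitely many components.

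Next I analyze the kernel of the reduction map, namely the automorphisms of $Z^{(1)}$ restricting to the identity on $Z$. Such an automorphism $\phi$ preserves the nilradical $I/I^2$ and is encoded by an $\OO_Z$-linear automorphism of $I/I^2$ together with a $\C$-derivation $\OO_Z\to I/I^2$. Hence this kernel is an extension of the unit group $A^{*}$ of the finite-dimensional $\C$-algebra $A=\operatorname{End}_{\OO_Z}(I/I^2)$ (finite-dimensional since $Z$ is compact and $I/I^2$ is coherent) by the vector group $\operatorname{Der}_{\C}(\OO_Z,I/I^2)\cong\operatorname{Hom}_{\OO_Z}(\Omega^1_Z,I/I^2)$. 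Both factors are connected: $A^{*}$ is the complement of a hypersurface in the affine space $A$ (the vanishing of the determinant of left multiplication), and the derivation module is a vector group. Therefore the kernel of the reduction is connected, and consequently $\mathcal{L}_{[1]}$, being an extension of a group with finitely many components by a connected group, has finitely many components.

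Finally, a finite-dimensional complex Lie group with finitely many components is Jordan: its identity component is Jordan as a connected Lie group, and the (finite) component group contributes only a bounded index, so $\mathcal{L}_{[1]}$ is Jordan. Combining with the extension $1\to\mathcal{L}_{[1]}\to\mathcal{L}\to\Phi(\mathcal{L})\to 1$, a Jordan kernel together with a quotient whose finite subgroups have bounded order yields that $\mathcal{L}$ is Jordan; moreover finite abelian subgroups of the finite-dimensional Lie group $\mathcal{L}$ have uniformly bounded rank, so $\mathcal{L}$ is strongly Jordan. As every finite subgroup of $\Aut(X,Z)$ embeds into $\mathcal{L}$ by the first step, $\Aut(X,Z)$ is strongly Jordan. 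The main obstacle is the almost-connectedness of $\mathcal{L}_{[1]}$: it rests on the class-$\mathcal{C}$ form of the Fujiki--Lieberman theorem for the (possibly singular) reduced space $Z$, which I would reduce to the smooth case by an equivariant projective resolution, and on verifying that passing from $Z$ to the non-reduced thickening $Z^{(1)}$ enlarges only the identity component.
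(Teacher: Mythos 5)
Your opening reduction is sound and is in fact a nice variant of the paper's key Lemma~\ref{lem-copy}: if $g$ has finite order, acts as the identity on $Z$ and trivially on $I/I^2$, then $g^*$ is unipotent on each $\OO_X/I^{n+1}$, hence trivial on all of them, and $g=\id$ by Krull intersection plus the identity theorem. The proof collapses, however, at the step where you invoke ``the Fujiki--Lieberman theorem for Fujiki's class $\mathcal{C}$.'' The finiteness of $\Aut_\tau(Z)/\Aut_0(Z)$ (equivalently, of your $\Aut_{[1]}(Z)$ modulo $\Aut_0(Z)$) is proved in \cite{Fuj78} and \cite{Lie78} only for compact \emph{K\"ahler} manifolds; for Moishezon manifolds and, more generally, for class $\mathcal{C}$ it is precisely the paper's open Question~\ref{Q2}. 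Indeed, the paper remarks that a positive answer to Question~\ref{Q2} would yield an alternative proof of Corollary~\ref{Cor1} via Minkowski's Theorem~\ref{thm-Mink} --- your argument \emph{is} essentially that alternative proof, so it presupposes the open problem rather than proving the theorem. Your proposed repair (an equivariant projective resolution of $Z$) does not close the gap: ``projective'' there refers to the resolution morphism, so the resolved manifold is still merely smooth of class $\mathcal{C}$, where the question remains open; moreover triviality of the action on $H^*(Z)$ does not transfer in any controlled way to the cohomology of the resolution. A secondary, lesser gap: your connectedness claim for $\ker\bigl(\Aut(Z^{(1)})\to\Aut(Z)\bigr)$ assumes that every $\OO_Z$-linear automorphism of $I/I^2$ lifts to $Z^{(1)}$; in fact only a subgroup of $A^*$ arises this way, and a closed subgroup of a connected group can be disconnected, so connectedness of $A^*$ and of the derivation group is not enough.

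This is exactly the difficulty the paper's actual proof is engineered to avoid. Instead of proving component-finiteness for automorphisms of a class-$\mathcal{C}$ space, the paper shrinks $Z$ equivariantly until it becomes K\"ahler: if $Z$ is smooth, connected, compact, in class $\mathcal{C}$ but not K\"ahler, there is a big $(1,1)$-class $[\alpha]$ on $Z$ (Demailly--P\u{a}un), and its non-K\"ahler locus $E_{nK}(\alpha)$ is a nonempty proper analytic subset invariant under the subgroup $G_\tau$ of elements acting trivially on $H^2(Z,\Q)$; the quotient $G/G_\tau$ has bounded finite subgroups by Minkowski (Theorem~\ref{thm-Mink}), which is all that Lemma~\ref{lem-exact} requires --- no Fujiki--Lieberman statement is needed at this stage. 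Iterating this with steps handling disconnectedness and singularity strictly drops $\dim Z$, so the process terminates with a $G$-invariant connected compact \emph{K\"ahler} submanifold, and only then does the paper apply the normal-bundle embedding (Lemma~\ref{lem-copy}) and the compactification $\PP(\mathcal{N}_{Z/X}\oplus\OO)$ to land in the compact K\"ahler setting, where the Fujiki--Lieberman theorem genuinely applies. If you can prove the class-$\mathcal{C}$ Fujiki--Lieberman statement you rely on, you will have answered Question~\ref{Q2}, which is a result of independent interest beyond this theorem.
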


The following result is immediately obtained by taking an equivariant resolution to reduce to the smooth case and then applying Theorem 1.1 with $Z=X$.
In particular, it answers the question for the Moishezon manifolds by Prokhorov and Shramov, who proved the case of Moishezon threefolds by using a quite different method (cf.~\cite{PS19}).

\begin{corollary} \label{Cor1}
Let $X$ be a reduced compact complex space.
Then $\Aut(X)$ is strongly Jordan in the following cases (where (1) is a special case of (2)):

\begin{enumerate}
\item $X$ is Moishezon, i.e., $X$ is bimeromorphic to a projective variety.
\item $X$ is in Fujiki's class $\mathcal{C}$, i.e., $X$ is the meromorphic image of a compact K\"ahler manifold.
\end{enumerate}
\end{corollary}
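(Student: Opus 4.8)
The plan is to reduce the statement to Theorem \ref{main-thm} by means of an equivariant resolution of singularities, handling only case (2), since every Moishezon space lies in Fujiki's class $\mathcal{C}$ and so (1) is subsumed by (2). Throughout I will use two elementary permanence properties of the strongly Jordan condition, both immediate from the definition: a subgroup of a strongly Jordan group is strongly Jordan, and a finite direct product $G_1\times\cdots\times G_r$ of strongly Jordan groups is strongly Jordan (intersect the abelian subgroups coordinatewise to bound the Jordan index by $\prod_i J(G_i)$, and bound the number of generators of a finite abelian subgroup by $\sum_i N(G_i)$, using that the minimal number of generators of a subgroup of a finite abelian group does not exceed that of the group).

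First I would reduce to the case that $X$ is irreducible. Let $X_1,\dots,X_r$ be the irreducible components of $X$. Each $g\in\Aut(X)$ permutes the $X_i$, giving a homomorphism $\rho\colon\Aut(X)\to \mathfrak{S}_r$ to the symmetric group on $r$ letters. Writing $\Aut(X)^\circ:=\Ker(\rho)=\bigcap_i\Aut(X,X_i)$ for the (normal) subgroup preserving every component, restriction to the components defines an injective homomorphism $\Aut(X)^\circ\hookrightarrow\prod_{i=1}^r\Aut(X_i)$. If each $\Aut(X_i)$ is strongly Jordan, so is the product, hence so is $\Aut(X)^\circ$; and since $[\Aut(X):\Aut(X)^\circ]\le r!$ is bounded, a routine argument upgrades this to $\Aut(X)$ itself (for a finite $F\le\Aut(X)$ the intersection $F\cap\Aut(X)^\circ$ has index $\le r!$ in $F$, which bounds the Jordan index, while a finite abelian subgroup needs at most $\log_2(r!)$ extra generators beyond those of its intersection with $\Aut(X)^\circ$). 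Each $X_i$ is again a reduced compact complex space in Fujiki's class $\mathcal{C}$, so it remains to treat the irreducible case.

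Now assume $X$ irreducible. By Hironaka's resolution of singularities in its functorial (hence automatically equivariant) form, there is a projective bimeromorphic morphism $\pi\colon\widetilde{X}\to X$ from a connected compact complex manifold $\widetilde{X}$ such that every $g\in\Aut(X)$ lifts to a unique $\widetilde{g}\in\Aut(\widetilde{X})$ with $\pi\circ\widetilde{g}=g\circ\pi$. Functoriality makes $g\mapsto\widetilde{g}$ a group homomorphism $\Aut(X)\to\Aut(\widetilde{X})$, and it is injective, since $\widetilde{g}=\id$ forces $g\circ\pi=\pi$ and hence $g=\id$ as $\pi$ is surjective. Because $X$ lies in Fujiki's class $\mathcal{C}$ and class $\mathcal{C}$ is a bimeromorphic invariant among compact complex manifolds, the smooth model $\widetilde{X}$ again belongs to class $\mathcal{C}$.

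Finally I would apply Theorem \ref{main-thm} to the connected manifold $\widetilde{X}$ with the choice $Z=\widetilde{X}$: this is legitimate, as $Z=\widetilde{X}$ is a compact complex subspace in Fujiki's class $\mathcal{C}$, and it yields that $\Aut(\widetilde{X})=\Aut(\widetilde{X},\widetilde{X})$ is strongly Jordan. Its subgroup $\Aut(X)$ is then strongly Jordan as well, which finishes the irreducible case and hence the corollary. The entire content thus rests on Theorem \ref{main-thm}; beyond the bookkeeping above, the only genuine inputs are the existence of an equivariant resolution and the bimeromorphic invariance of class $\mathcal{C}$, which together guarantee that the smooth connected model to which Theorem \ref{main-thm} is applied still satisfies all of its hypotheses. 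I expect this last verification — ensuring that passing to $\widetilde{X}$ preserves both connectedness (via the reduction to the irreducible case) and membership in class $\mathcal{C}$ — to be the only point requiring care.
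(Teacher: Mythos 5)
Your proposal follows essentially the same route as the paper: take an equivariant (functorial) projective resolution to reduce to a smooth connected model in Fujiki's class $\mathcal{C}$, then apply Theorem \ref{main-thm} with $Z$ equal to the whole resolved space. The only difference is that you spell out the bookkeeping the paper leaves implicit --- the reduction to irreducible $X$ via the permutation action on components, the injectivity of the lift to the resolution, and the bimeromorphic invariance of class $\mathcal{C}$ --- all of which is correct.
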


\par \vskip 1pc \noindent
{\bf Difference, with others, of our approach towards non-projective varieties:}

In \cite{PS19}, the authors utilise the maximal rational connected fibration $X \dasharrow V$ which still exists for their Moishezon threefolds (and indeed, for all Moishezon manifolds), and the famous non-uniruled-ness of $V$ due to Graber, Harris and Starr, to either show that $X$ is indeed a projective variety and then apply \cite{MZ18}, or show that $X$ is a rationally connected variety and then apply \cite{PS14}, or reduce to a very general fibre (a complex surface or curve) and then apply \cite{PS}.

Our approach to Theorem \ref{main-thm} is based on a very simple idea: make use of the non-K\"ahler locus of a big $(1,1)$ class $[\alpha]$ on an ``invariant'' subspace $Z$ (in Fujiki's class $\mathcal{C}$)
$$E_{nK}(\alpha):=E_{nK}([\alpha]):=\bigcap\limits_{T\in[\alpha]} \textup{Sing}(T)$$  to find some ``invariant'' K\"ahler submanifold $Z_1 \subseteq Z \subseteq X$. 
Here, the intersection ranges over all K\"ahler currents $T =\alpha+i\partial\bar{\partial}\varphi$ in the class $[\alpha]$, and $\textup{Sing}(T)$ is the complement of the set of points $z\in Z$ such that $\varphi$ is smooth near $z$.
We may need to frequently shrink the automorphism group a bit to keep the ``invariant'' property, while the Jordan property is not affected.
Next, we focus on the (linear) automorphism group of the normal bundle $\mathcal{N}_{Z_1/X}$ as inspired by Mundet i Riera \cite{Mun19}; see Lemma \ref{lem-copy}. This way, we reduce the question on (strongly) Jordan property to the case for compact K\"ahler manifolds by an equivariant compactification of $\mathcal{N}_{Z_1/X}$.
We refer to \cite[\S 2.4]{Tos18} for the further details on the non-K\"ahler locus of a big class.

\vskip 2mm
We end the introduction with the following two questions.

\begin{question}\label{Q2}
Let $X$ be a compact complex manifold.
Suppose $X$ is Moishezon or is in Fujiki's class $\mathcal{C}$.
Is $\Aut_{\tau}(X):=\{g\in \Aut(X)\,|\, g^*|_{H^2(X,\Q)}=\id\}$ a finite-index extension of
the neutral connected component $\Aut_0(X)$ of $\Aut(X)$?
\end{question}

\begin{question}\label{Q1}
Let $X$ be a compact complex manifold.
If $X$ is Moishezon (or in Fujiki's class $\mathcal{C}$), can one find a bimeromorphic model $\widetilde{X}$ of $X$
such that $\Aut(X)$ lifts to $\widetilde{X}$, and $\widetilde{X}$ is projective (or K\"ahler)?
\end{question}

A positive answer to Question \ref{Q1} implies a positive answer to Question \ref{Q2}, by making use
of Fujiki \cite[Theorem 4.8]{Fuj78} or Lieberman \cite[Proposition 2.2]{Lie78} and the norm criterion \cite[Proposition 2.9]{MZ18a} for the pseudo-effective cone and the nef cone.

A positive answer to Question \ref{Q2} will render an alternative proof to Corollary \ref{Cor1} by applying Minkowski Theorem \ref{thm-Mink} to $\GL(H^2(X, \Q))$ in order to reduce to the case for $\Aut_0(X)$ (a Lie group)
which is a known case (cf.~\cite{Pop18}).

Question \ref{Q2} has a positive answer when $X$ is a compact K\"ahler manifold (cf.~\cite{Fuj78}, \cite{Lie78}).

\par \vskip 1pc \noindent
{\bf Acknowledgement.}

The first author would like to thank Doctor Xueyuan Wan for answering several questions on currents.
The third author would like to thank Professor Mihai P\u{a}un for the helpful discussions on Question \ref{Q1}.
The authors would like to thank the referee for valuable suggestions to improve the paper.
The first author is supported by a Research Fellowship of KIAS (MG075501).
The second author is supported by the national projects
PRIN 2015EYPTSB-PE1 ``Geometria delle variet\`a algebriche''
and 2017SSNZAW 005-PE1 ``Moduli Theory and Birational Classification'',
by the research group GNSAGA of INDAM and by FRA 2018 of the University of Trieste.
The third author is supported by an ARF of NUS.

\section{Preliminaries}

We use the following notation throughout this paper.
\begin{notation}\label{notation2.1}
Let $X$ be a connected complex space.

\begin{enumerate}
\item $\Aut(X)$ is the group of all biholomorphic automorphisms of $X$.
\item $\Aut(X,Z):=\{g\in \Aut(X)\,|\, g(Z)=Z\}$ for a subset $Z$ of $X$.
\item $\Aut(\mathcal{E}\to X):=\{g\in \Aut(\mathcal{E})\,|\, g \text{ maps every bundle fibre linearly to some bundle fibre}\}$ for a holomorphic vector bundle $\mathcal{E}$ over $X$.

\par \noindent
In the following, $X$ is further assumed to be smooth and compact.
\item $\Aut_0(X)$ is the neutral connected component of $\Aut(X)$. 
\item $\Aut_{\tau}(X):=\{g\in \Aut(X)\,|\, g^*|_{H^2(X,\Q)}=\id\}$.
Clearly, $\Aut_{\tau}(X) \supseteq \Aut_0(X)$.
\end{enumerate}
\end{notation}

\begin{notation}\label{notation2.2}
Let $X$ be a connected complex manifold and $Z$ a connected complex submanifold.
Let $\mathcal{T}_X$ be the tangent bundle of $X$ and $\mathcal{N}_{Z/X}$ the normal bundle.
Let $g\in \Aut(X, Z)$.
Denote by $\mathcal{T}_g$ the induced tangent automorphism of $\mathcal{T}_X$.
Then we have the following commutative diagram:
\[
\xymatrix{
0\ar[r]&\mathcal{T}_Z\ar[r]\ar[d]^{\mathcal{T}_{g|_Z}}&\mathcal{T}_X|_Z\ar[r]\ar[d]^{{\mathcal{T}_g}|_Z}&\mathcal{T}_X|_Z/\mathcal{T}_Z\ar[r]\ar[d]^{\mathcal{N}_g}&0\\
0\ar[r]&\mathcal{T}_Z\ar[r]&\mathcal{T}_X|_Z\ar[r]&\mathcal{T}_X|_Z/\mathcal{T}_Z\ar[r]&0
}
\]
where $\mathcal{N}_g$ is the induced automorphism of the normal bundle $\mathcal{N}_{Z/X} = (\mathcal{T}_X|_Z)/\mathcal{T}_Z$.
In particular, we have a group homomorphism
$$\mathcal{N}: \Aut(X, Z) \to \Aut(\mathcal{N}_{Z/X}\to Z)$$
via $g\mapsto \mathcal{N}_g$.
\end{notation}

The following result is very important in the proof of Theorem \ref{main-thm}.

\begin{lemma}\label{lem-copy}
Let $X$ be a connected complex manifold and $Z$ a non-empty connected complex submanifold.
Then the kernel of the natural homomorphism 
$$
\mathcal{N}: \Aut(X,Z)\to \Aut(\mathcal{N}_{Z/X}\to Z)
$$
contains no non-trivial subgroup of finite order. In particular,
$\Aut(\mathcal{N}_{Z/X}\to Z)$ contains an isomorphic copy of every finite subgroup of $\Aut(X,Z)$.
\end{lemma}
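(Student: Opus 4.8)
The plan is to prove the stronger statement that the kernel $K := \Ker(\mathcal{N})$ is torsion-free; the final assertion then follows formally, since for a finite subgroup $G \le \Aut(X,Z)$ the intersection $G \cap K$ is a finite subgroup of $K$, hence trivial, so that the restriction $\mathcal{N}|_G$ is injective and $\mathcal{N}(G) \cong G$. Thus it suffices to take $g \in K$ of finite order and show that $g = \id$.

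First I would unwind what membership in $K$ means. Every element of $\Aut(\mathcal{N}_{Z/X} \to Z)$ covers a well-defined automorphism of the base $Z$, and the identity covers $\id_Z$; hence $\mathcal{N}_g = \id$ forces the base map $g|_Z = \id_Z$, so $g$ fixes $Z$ pointwise, and moreover $\mathcal{N}_g$ is fiberwise the identity. Now fix any point $p \in Z$. Then $g(p) = p$, and $g$ acts on $T_pX$ via $dg_p$, preserving the subspace $T_pZ$ (because $g(Z)=Z$). With respect to the fiber at $p$ of the exact sequence $0 \to \mathcal{T}_Z \to \mathcal{T}_X|_Z \to \mathcal{N}_{Z/X} \to 0$, the endomorphism $dg_p$ is block upper triangular; its restriction to $T_pZ$ is $d(g|_Z)_p = \id$, and its induced action on $\mathcal{N}_{Z/X,p}$ is $(\mathcal{N}_g)_p = \id$. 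Consequently $dg_p = \id + N$ with $N(T_pX) \subseteq T_pZ$ and $N|_{T_pZ} = 0$, so $N^2 = 0$ and $dg_p$ is unipotent. Since $g$ has finite order $n$, so does $dg_p$, and expanding $(\id + N)^n = \id$ with $N^2 = 0$ gives $nN = 0$, whence $N = 0$ and $dg_p = \id$.

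To pass from $dg_p = \id$ to $g = \id$, I would invoke holomorphic linearization (Cartan--Bochner): since $g$ generates a finite cyclic group of biholomorphisms fixing $p$, averaging an arbitrary coordinate chart centered at $p$ over $\langle g \rangle$ produces local holomorphic coordinates in which $g$ acts linearly as $dg_p = \id$. Thus $g$ is the identity on a neighborhood of $p$, and since $X$ is connected, the identity theorem for holomorphic maps forces $g = \id$ on all of $X$. This shows that $K$ is torsion-free and completes the proof.

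I expect the only genuinely delicate point to be the linearization step, where one must use that a finite-order holomorphic automorphism fixing a point is linearizable in suitable local coordinates near that point; everything else is linear algebra over $\C$ together with analytic continuation. One should also note that $Z$ being a submanifold is exactly what guarantees the displayed short exact sequence is exact on the fiber over $p$, which is what licenses the block-triangular description of $dg_p$ and hence the unipotency argument.
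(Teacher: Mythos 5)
Your proof is correct and takes essentially the same route as the paper's: a finite-order element of the kernel restricts to the identity on $Z$, acts unipotently on the fibres of $\mathcal{T}_X|_Z$ by the block-triangular structure, hence has identity differential along $Z$ since a finite-order unipotent map is trivial, and is then the identity everywhere by connectedness. The only difference is that you prove the final step inline via the Cartan averaging/linearization argument, whereas the paper cites \cite[Lemma 2.1(2)]{Mun19} for exactly that fact.
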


\begin{proof}
Suppose $g\in \Ker \mathcal{N}$ has finite order.
Then $g|_Z=\id$ (and hence $\mathcal{T}_{g|_Z}=\id$) and $\mathcal{N}_g=\id$.
Let $z\in Z$.
Let $\{x_1,\cdots,x_m, y_1,\cdots, y_n\}$ be a basis of $\mathcal{T}_{X,z}$ such that $\{x_1,\cdots,x_m\}$ is a basis of $\mathcal{T}_{Z,z}$.
Since $\mathcal{T}_{g|_Z}=\id$, we have $\mathcal{T}_{g}|_{z}(x_i)=x_i$ for each $x_i$.
Since $\mathcal{N}_g|_{\mathcal{N}_{Z/X,z}}=\id$, we have $\mathcal{T}_{g}|_{z}(y_j)-y_j\in \mathcal{T}_{Z,z}$ for each $y_j$.
Therefore, under the above basis, $\mathcal{T}_{g}|_{z}$ is a lower triangular matrix with diagonal entries all being $1$.
In particular its eigenvalues are all equal to $1$.
Note that $\mathcal{T}_{g}$ has finite order, hence it is diagonalizable and so it is the identity map. 
This, together with 
$X$ being connected, imply $g=\id$ (cf.~\cite[Lemma 2.1(2)]{Mun19}).
The lemma is proved.
\end{proof}

A group $G$ has {\it bounded finite subgroups} if there is a constant $N = N(G)$ such that any finite subgroup $H\le G$ has order $|H| \le N$.

For the (strongly) Jordan property, we may always replace the group by its normal subgroup with the quotient group having bounded finite subgroups.
Indeed, we have:

\begin{lemma}\label{lem-exact}
Consider the exact sequence of groups
$$1 \to G_1 \to G \to G_2 \to 1.$$
Suppose $G_1$ is Jordan (resp. strongly Jordan) and $G_2$ has bounded finite subgroups.
Then $G$ is Jordan (resp. strongly Jordan).
\end{lemma}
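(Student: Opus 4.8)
The plan is to exploit the normal subgroup $G_1 \trianglelefteq G$ (the injective image of the first map, which is normal as it is the kernel of the second) together with the uniform order bound on finite subgroups of $G_2 \cong G/G_1$, and thereby transfer the (strongly) Jordan property from $G_1$ to $G$ at the cost of a bounded multiplicative factor. Write $\pi \colon G \to G_2$ for the quotient map, let $J_1 = J(G_1)$ be the Jordan constant of $G_1$, and let $N_2 = N(G_2)$ be the bound such that every finite subgroup of $G_2$ has order at most $N_2$. Throughout I identify $G_1$ with $\Ker \pi \le G$.

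First I would establish the plain Jordan property. Given an arbitrary finite subgroup $H \le G$, set $H_0 := H \cap G_1 = \Ker(\pi|_H)$. Then $H_0$ is a finite subgroup of $G_1$, and the induced injection $H/H_0 \hookrightarrow G_2$ identifies $H/H_0$ with the finite subgroup $\pi(H) \le G_2$, whence $[H : H_0] = |\pi(H)| \le N_2$. Since $G_1$ is Jordan, $H_0$ contains an abelian subgroup $A \le H_0$ with $[H_0 : A] \le J_1$. Multiplying indices along $A \le H_0 \le H$ gives $[H : A] = [H : H_0]\,[H_0 : A] \le N_2\, J_1$, so $A$ is the required abelian subgroup and $G$ is Jordan with constant $J(G) := N_2\, J_1$.

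For the strongly Jordan refinement, assume in addition that $G_1$ is strongly Jordan, say every finite abelian subgroup of $G_1$ is generated by at most $N_1 = N(G_1)$ elements. Let $B \le G$ be a finite abelian subgroup and put $B_0 := B \cap G_1$. Then $B_0$ is a finite abelian subgroup of $G_1$, hence generated by at most $N_1$ elements, while $B/B_0 \hookrightarrow G_2$ is a finite group of order at most $N_2$, hence generated by at most $\log_2 N_2$ elements (a finite group of order $m$ is generated by at most $\log_2 m$ elements, since in any strictly increasing chain obtained by adjoining one generator at a time each step at least doubles the order). Adjoining to a generating set of $B_0$ a set of lifts in $B$ of generators of $B/B_0$ produces a generating set of $B$ of cardinality at most $N_1 + \log_2 N_2$, because such a set both surjects onto $B/B_0$ and contains $B_0$. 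Hence $G$ is strongly Jordan with $N(G) := N_1 + \lfloor \log_2 N_2 \rfloor$.

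The argument is entirely elementary, and I do not anticipate a genuine obstacle: the mechanism is simply that intersecting a finite subgroup with $G_1$ loses only a factor of size at most $N_2$. The one place meriting minor care is the strongly Jordan step, where one must pass from the \emph{order} bound on finite subgroups of $G_2$ to a \emph{generator-count} bound, and then combine generators of the subgroup $B_0$ with lifted generators of the quotient $B/B_0$ to generate $B$.
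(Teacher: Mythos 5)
Your proof is correct. Note that the paper itself gives no proof of this lemma at all --- it is stated as a standard fact --- so there is nothing to compare against; your argument (intersect a finite subgroup with $G_1 = \Ker\pi$, lose at most a factor $N_2$ in the index, and, for the strong version, combine generators of $B \cap G_1$ with lifts of generators of the quotient, using the $\log_2$ bound on generators of a finite group) is exactly the standard argument the authors are implicitly invoking, and it works with the paper's definitions (in particular, no normality is required of the abelian subgroup).
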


To obtain quotient groups having bounded finite subgroups, we often use the following wonderful theorem of Minkowski which allows us to make use of rational representations of geometric automorphisms; see \cite[Theorem 5, and \S4.3]{Ser07}.

\begin{theorem}[Minkowski]\label{thm-Mink}
$\GL_n(K)$ has bounded finite subgroups, whenever $K$ is a number field. The bound depends only on $n$ and
the field extension degree $[K:\mathbb{Q}]$.
\end{theorem}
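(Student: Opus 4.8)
The plan is to reduce the general number-field case to the rational case $K=\Q$ by restriction of scalars, and then to settle the case over $\Q$ by the classical reduction-modulo-$3$ argument of Minkowski.

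First I would pass from $K$ to $\Q$. Writing $d=[K:\Q]$, the space $K^n$ is a $\Q$-vector space of dimension $nd$, and every $K$-linear automorphism is in particular $\Q$-linear and invertible. This yields a faithful group homomorphism $\GL_n(K)\hookrightarrow \GL_{nd}(\Q)$ (Weil restriction of scalars), under which any finite subgroup $H\le \GL_n(K)$ maps isomorphically onto a finite subgroup of $\GL_{nd}(\Q)$. Hence it suffices to bound the order of finite subgroups of $\GL_m(\Q)$ in terms of $m$ alone: if that bound is $M(m)$, then $|H|\le M(nd)$, which depends only on $n$ and $d$, exactly as asserted.

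Next I would fix a finite subgroup $H\le \GL_m(\Q)$ with $m=nd$. Averaging the standard lattice produces the $H$-stable lattice $L=\sum_{h\in H} h\,\Z^m$, which is free of rank $m$; choosing a $\Z$-basis of $L$ conjugates $H$ into $\GL_m(\Z)$. I would then consider the reduction homomorphism $\rho\colon \GL_m(\Z)\to \GL_m(\F_3)$ and claim that its kernel is torsion-free. Granting this, $\rho$ is injective on the finite group $H$, so $|H|\le |\GL_m(\F_3)|=:M(m)$, completing the proof.

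The heart of the matter, and the step I expect to be the main obstacle, is the torsion-freeness of this congruence kernel. Embedding $\GL_m(\Z)\hookrightarrow \GL_m(\Z_3)$, it is enough to show that $\Gamma_1=\Ker(\GL_m(\Z_3)\to \GL_m(\F_3))$ contains no element of prime order. Suppose $g\in\Gamma_1\setminus\{\id\}$ satisfies $g^{\ell}=\id$ for a prime $\ell$, and write $g=\id+3^{k}A$ with $k\ge 1$ and $A\in M_m(\Z_3)$, $A\not\equiv 0\pmod 3$. Expanding $g^{\ell}$ by the binomial theorem, one compares $3$-adic valuations: when $\ell\ne 3$ the term $\ell\,3^{k}A$ has valuation exactly $k$ and dominates all higher terms; when $\ell=3$ the term $3^{k+1}A$ has valuation exactly $k+1$ and, because $k\ge 1$, strictly dominates both $3^{2k+1}A^{2}$ and $3^{3k}A^{3}$. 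In either case $g^{\ell}\ne\id$, a contradiction. It is precisely here that the choice of the odd prime $3$, rather than $2$, is needed: for $p=2$ and $k=1$ the two leading terms acquire equal valuation and may cancel, so the kernel of reduction mod $2$ need not be torsion-free. Finally I would record that the resulting bound $M(nd)=|\GL_{nd}(\F_3)|=\prod_{i=0}^{nd-1}(3^{nd}-3^{i})$ depends only on $n$ and $d=[K:\Q]$, which is exactly the uniformity claimed in the statement.
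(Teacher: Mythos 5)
Your proposal is correct, but it takes a different route from the paper, which in fact offers no proof at all: Theorem \ref{thm-Mink} is quoted there as a known result, with a pointer to Serre's survey \cite{Ser07} (Theorem 5 and \S 4.3), where the \emph{sharp} Minkowski bound is obtained by counting, for each prime $\ell$, the maximal $\ell$-adic valuation of the order of a finite subgroup of $\GL_n(K)$. What you give instead is the classical elementary argument: restriction of scalars $\GL_n(K)\hookrightarrow \GL_{nd}(\Q)$ with $d=[K:\Q]$, conjugation of a finite subgroup into $\GL_{nd}(\Z)$ by averaging the standard lattice, and injectivity of reduction modulo $3$ on finite subgroups, proved by the $3$-adic valuation comparison in the binomial expansion of $(\id+3^kA)^{\ell}$ (your case split $\ell\neq 3$ versus $\ell=3$, and the observation that $k\ge 1$ makes $3^{2k+1}A^2$ and $3^{3k}A^3$ strictly subordinate to $3^{k+1}A$, is exactly right, as is the remark that $p=2$ fails because of $-\id$). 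All steps are sound: the lattice $L=\sum_{h\in H}h\,\Z^m$ is finitely generated, torsion-free, and spans $\Q^m$, hence free of rank $m$, so the conjugation step is legitimate. The trade-off between the two approaches: the citation to Serre buys the optimal bound (attained for each $n$ and $K$), whereas your argument buys a short, self-contained proof with the cruder bound $\lvert\GL_{nd}(\mathbb{F}_3)\rvert=\prod_{i=0}^{nd-1}(3^{nd}-3^i)$; since the paper only ever uses the qualitative statement that the bound is finite and depends only on $n$ and $[K:\Q]$ (in Theorem \ref{thm-kim-popov} and in the Main Program), your version would serve the paper's purposes equally well.
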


There are two ways to obtain the strongly Jordan property for compact K\"ahler manifolds.
We adopt a shorter one here.

\begin{theorem}\textup{(\cite[Theorem 1.1]{Kim18}, \cite[Theorem 2]{Pop18})}\label{thm-kim-popov}
Let $X$ be a compact K\"ahler manifold.
Then $\Aut(X)$ is strongly Jordan.
\end{theorem}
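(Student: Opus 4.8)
The plan is a two-stage dévissage: reduce $\Aut(X)$ first to the neutral component $\Aut_0(X)$, and then reduce that connected Lie group to a classical linear group where Jordan's original theorem applies. At each stage I would invoke Lemma~\ref{lem-exact}, which lets me peel off either a quotient or a finite-index subgroup as soon as the discarded piece has bounded finite subgroups. For the first stage, consider the action of $\Aut(X)$ on the finite-dimensional space $H^2(X,\Q)$; since automorphisms preserve the integral lattice $H^2(X,\Z)/\torsion$, this yields an exact sequence
\[
1 \to \Aut_{\tau}(X) \to \Aut(X) \to Q \to 1,
\]
where $\Aut_{\tau}(X)$ is the kernel by its very definition and $Q \le \GL(H^2(X,\Z)/\torsion) \cong \GL_n(\Z)$. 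By Minkowski's Theorem~\ref{thm-Mink}, $\GL_n(\Z)$ has bounded finite subgroups, hence so does $Q$, and Lemma~\ref{lem-exact} reduces the problem to proving that $\Aut_{\tau}(X)$ is strongly Jordan.

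For the second stage I would invoke the Fujiki--Lieberman theorem, i.e.\ the affirmative answer to Question~\ref{Q2} in the Kähler case (cf.~\cite{Fuj78}, \cite{Lie78}): for a compact Kähler manifold the neutral component $\Aut_0(X)$ has finite index in $\Aut_{\tau}(X)$. Consequently the quotient $\Aut_{\tau}(X)/\Aut_0(X)$ is finite and \emph{a fortiori} has bounded finite subgroups, so a second application of Lemma~\ref{lem-exact} reduces everything to showing that $\Aut_0(X)$ is strongly Jordan.

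Finally, $\Aut_0(X)$ is a connected (complex, hence real) Lie group, and here I would follow Popov's reduction to the linear case. Fix a maximal compact subgroup $K \le \Aut_0(X)$. Any finite subgroup of $\Aut_0(X)$ is compact, so by the conjugacy of maximal compact subgroups in a connected Lie group (Cartan--Iwasawa--Malcev) it is conjugate into $K$; thus the strongly Jordan property for $\Aut_0(X)$ follows from that for $K$. Since a compact Lie group $K$ embeds in some $U(m) \le \GL_m(\C)$, Jordan's classical theorem supplies the Jordan constant $J(m)$, while every finite abelian subgroup of $\GL_m(\C)$ is diagonalizable, hence contained in a maximal torus $(\C^\ast)^m$ and generated by at most $m$ elements. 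This gives the bound on the number of generators and completes the chain of reductions.

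The crux is the second stage: the finiteness of $\Aut_{\tau}(X)/\Aut_0(X)$ is precisely where the Kähler hypothesis is indispensable (through Hodge theory and the boundedness of automorphisms preserving a Kähler class), and it is exactly the analogue of this statement for Fujiki's class $\mathcal{C}$, namely Question~\ref{Q2}, that remains open in general. The first and third stages are comparatively formal: the first is Minkowski-plus-dévissage, and the third rests only on standard Lie theory together with Jordan's $1878$ theorem.
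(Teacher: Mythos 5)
Your proposal is correct and takes essentially the same route as the paper's proof: Minkowski's theorem applied to the pullback representation on $H^2(X,\Q)$ to dispose of $\Aut(X)/\Aut_\tau(X)$, the Fujiki--Lieberman finiteness of $\Aut_\tau(X)/\Aut_0(X)$, and then the Lie-theoretic reduction for $\Aut_0(X)$ via maximal compact subgroups and a unitary embedding --- which is precisely what the paper invokes by citing the proof of \cite[Theorem 2]{Pop18} and \cite[Lemmas 2.3 and 2.4]{MZ18}, and which you simply spell out. The only cosmetic differences are that you apply Lemma~\ref{lem-exact} twice rather than once and pass through the lattice $H^2(X,\Z)/\torsion$ and $\GL_n(\Z)$ instead of using $\GL(H^2(X,\Q))$ directly.
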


\begin{proof}
The pullback action of $\Aut(X)$ on $H^2(X, \Q)$ gives a faithful rational representation
$$\Aut(X)/\Aut_\tau(X)\hookrightarrow \GL(H^2(X, \Q))$$
with the latter group having bounded finite subgroups by Theorem \ref{thm-Mink}.
Note that $\Aut_\tau(X)/\Aut_0(X)$ is a finite group by \cite[Proposition 2.2]{Lie78} or \cite[Theorem 4.8]{Fuj78}.
Therefore, $\Aut(X)/\Aut_0(X)$ has bounded finite subgroups.
By Lemma \ref{lem-exact},
it suffices to show $\Aut_0(X)$ is strongly Jordan.
By the proof of \cite[Theorem 2]{Pop18},
there exists some $n$ such that $\GL_n(\R)$ contains an isomorphic copy of every finite subgroup of
$\Aut_0(X)$.
Note that $\GL_n(\R)$ is strongly Jordan (cf.~e.g.~\cite[Lemmas 2.3 and 2.4]{MZ18}).
The theorem follows.
\end{proof}

\section{Proof of Theorem \ref{main-thm}, and another open question}

In this section, we prove Theorem \ref{main-thm}, ask Question \ref{Q3} and give Remark \ref{rem_Q3}
illustrating the usefulness of the latter. We begin with the following.

\begin{theorem}\label{thm-jordan-bundle}
Let $X$ be a compact K\"ahler manifold and $\mathcal{E}$ a holomorphic vector bundle of finite rank $r$.
Then $\Aut(\mathcal{E}\to X)$ is strongly Jordan.
\end{theorem}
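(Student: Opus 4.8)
The plan is to realise $\Aut(\mathcal{E}\to X)$ as a subgroup of the automorphism group of a compact K\"ahler manifold and then invoke Theorem \ref{thm-kim-popov}. The natural candidate is the fibrewise projective completion $Y:=\PP(\mathcal{E}\oplus\mathcal{O}_X)$, the $\PP^r$-bundle over $X$ whose fibre over $x$ is the projective completion $\PP(\mathcal{E}_x\oplus\C)$ of the vector space $\mathcal{E}_x$. This is precisely an equivariant compactification of $\mathcal{E}$ in the sense indicated in the introduction: the total space of $\mathcal{E}$ embeds as the open dense subset $\{[v:1]\}=Y\setminus\PP(\mathcal{E})$, where $\PP(\mathcal{E})=\{t=0\}$ is the divisor at infinity. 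Recall that, by the very definition of $\Aut(\mathcal{E}\to X)$ in Notation \ref{notation2.1}, every $g$ maps fibres linearly to fibres and hence descends to a base automorphism $\bar g\in\Aut(X)$.

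First I would check that $Y$ is again compact K\"ahler. Properness of the projection $\pi:Y\to X$ over the compact base $X$ gives compactness. For the K\"ahler property, choose a Hermitian metric on $\mathcal{E}\oplus\mathcal{O}_X$; the induced closed $(1,1)$-form $\xi$ representing $c_1(\mathcal{O}_Y(1))$ is a relative Fubini--Study form, positive along every fibre. If $\omega_X$ is a K\"ahler form on $X$, then $\xi+C\,\pi^*\omega_X$ is a closed positive $(1,1)$-form for $C\gg 0$, by the standard compactness argument combining fibrewise positivity with the pulled-back base form; hence $Y$ is K\"ahler. This is the well-known fact that the projectivisation of a holomorphic vector bundle over a compact K\"ahler manifold is again compact K\"ahler.

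Next I would construct the homomorphism $\Phi:\Aut(\mathcal{E}\to X)\to\Aut(Y)$. Given $g$ covering $\bar g$ and acting on fibres by the linear isomorphisms $A_x:\mathcal{E}_x\to\mathcal{E}_{\bar g(x)}$, extend $A_x$ to $A_x\oplus\id$ on $\mathcal{E}_x\oplus\C$; this induces the biholomorphism $\Phi(g):[v:t]\mapsto[A_xv:t]$ of $Y$ covering $\bar g$. Since $\Phi(g)$ preserves $\{t=0\}=\PP(\mathcal{E})$, it preserves the open set $\mathcal{E}$, and its restriction there is exactly $g$. As two biholomorphisms of $Y$ agreeing on the dense open subset $\mathcal{E}$ must coincide, $\Phi$ is a well-defined group homomorphism, and the identity $\Phi(g)|_{\mathcal{E}}=g$ forces $\Phi$ to be injective.

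Finally, since $Y$ is compact K\"ahler, Theorem \ref{thm-kim-popov} gives that $\Aut(Y)$ is strongly Jordan; and any subgroup of a strongly Jordan group is again strongly Jordan, because a finite subgroup (resp.\ a finite abelian subgroup) of the subgroup is one of the ambient group, so both the Jordan constant and the generator bound are inherited. The injection $\Phi$ therefore shows that $\Aut(\mathcal{E}\to X)$ is strongly Jordan. The only points requiring genuine care are the two structural facts just used---that the projective completion stays K\"ahler and that $\Phi$ is injective---which I expect to be the main, though mild, obstacle; everything else is formal.
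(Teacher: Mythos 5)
Your proposal is correct and is essentially the paper's own proof: both compactify $\mathcal{E}$ fibrewise to the $\PP^r$-bundle $\PP(\mathcal{E}\oplus\mathcal{O})$, which remains compact K\"ahler, embed $\Aut(\mathcal{E}\to X)$ into $\Aut(\PP(\mathcal{E}\oplus\mathcal{O}))$ by extending each automorphism by the identity on the trivial factor, and conclude via Theorem \ref{thm-kim-popov}. The only cosmetic difference is that the paper packages the construction as a $\C^*$-quotient of $\mathcal{E}\oplus\mathcal{O}$ (a monomorphism $\phi$ followed by the quotient map $\psi$), whereas you write the same map directly in homogeneous coordinates and verify injectivity by density of $\mathcal{E}$ in $\PP(\mathcal{E}\oplus\mathcal{O})$.
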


\begin{proof}
Let $\mathcal{O} = X \times \mathbb{C}$
be the trivial line bundle over $X$.
Let $T:=\mathbb{C}^*$ act on $\mathcal{E} \oplus \mathcal{O}$
by the natural scalar multiplication.
Then there is a natural $T$-equivariant monomorphism
$$\phi:\Aut(\mathcal{E}\to X)\to \Aut(\mathcal{E}\oplus \mathcal{O} \to X)$$
via $g\mapsto g\oplus (g|_X \, \times \, \id_{\C})$.
Hence we have the following $\Aut(\mathcal{E}\to X)$-equivariant commutative diagram
\[
\xymatrix{
\mathcal{E}\ar@{^{(}->}[r]\ar[d] & \mathcal{E}\oplus \mathcal{O}\ar@{.>}[r]\ar[d] & \mathbb{P}(\mathcal{E}\oplus \mathcal{O})\ar[d]\\
X\ar@{=}[r]&X\ar@{=}[r]&X
};
\]
here  $\mathbb{P}(\mathcal{E}\oplus \mathcal{O}):=(\mathcal{E}\oplus \mathcal{O})/T$ is an analytic $\mathbb{P}^{r}$-bundle over $X$; hence it is again a compact K\"ahler manifold (cf.~\cite[Proposition 3.18, page78]{Voi07}).

Consider the homomorphism
$$\psi:\Aut(\mathcal{E}\oplus \mathcal{O} \to X)\to \Aut(\mathbb{P}(\mathcal{E}\oplus \mathcal{O}))$$
induced by the $T$-quotient.
Note that $\psi\circ\phi$ is still a monomorphism.
By Theorem \ref{thm-kim-popov}, $\Aut(\mathbb{P}(\mathcal{E}\oplus \mathcal{O}))$ and hence $\Aut(\mathcal{E}\to X)$ are strongly Jordan.
\end{proof}

We wonder whether we can drop the assumption of $X$
being compact K\"ahler in Theorem \ref{thm-jordan-bundle}.

\begin{question}\label{Q3}
Let $X$ be a connected complex manifold and $\mathcal{E}$ a holomorphic vector bundle of finite rank.
Will $\Aut(\mathcal{E}\to X)$ or $\Aut(\mathbb{P}(\mathcal{E}\oplus \mathcal{O}))$ be (strongly) Jordan if so is $\Aut(X)$?
\end{question}

We can make use of Lemma \ref{lem-copy} and Theorem \ref{thm-jordan-bundle} to prove the following:

\begin{theorem}\label{thm-jordan-Z}
Let $X$ be a connected complex manifold and $Z\subseteq X$ a non-empty connected compact K\"ahler submanifold.
Then $\Aut(X,Z)$ is strongly Jordan.
\end{theorem}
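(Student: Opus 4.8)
The plan is to combine Lemma \ref{lem-copy} with Theorem \ref{thm-jordan-bundle} via the normal bundle construction. First I would observe that $Z$ is a connected compact K\"ahler submanifold of $X$, so the normal bundle $\mathcal{N}_{Z/X}$ is a vector bundle of finite rank over the compact K\"ahler manifold $Z$. Theorem \ref{thm-jordan-bundle} then tells us that $\Aut(\mathcal{N}_{Z/X}\to Z)$ is strongly Jordan. The key linking device is the natural homomorphism $\mathcal{N}: \Aut(X,Z)\to \Aut(\mathcal{N}_{Z/X}\to Z)$ from Notation \ref{notation2.2}.

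Next I would invoke Lemma \ref{lem-copy}, whose conclusion is precisely that $\Aut(\mathcal{N}_{Z/X}\to Z)$ contains an isomorphic copy of every finite subgroup of $\Aut(X,Z)$. Concretely, given any finite subgroup $G\le \Aut(X,Z)$, the restriction $\mathcal{N}|_G$ is injective, since its kernel is a finite subgroup of $\Ker\mathcal{N}$ and Lemma \ref{lem-copy} says $\Ker\mathcal{N}$ contains no non-trivial finite-order element. Hence $\mathcal{N}(G)\cong G$ sits inside the strongly Jordan group $\Aut(\mathcal{N}_{Z/X}\to Z)$.

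Finally I would deduce the Jordan and bounded-rank conclusions for $G$ itself by transporting them through this isomorphism. Since $\mathcal{N}(G)\cong G$ and $\mathcal{N}(G)$ lives in a strongly Jordan group, $\mathcal{N}(G)$ has an abelian subgroup of index bounded by the Jordan constant $J(\Aut(\mathcal{N}_{Z/X}\to Z))$, and any finite abelian subgroup of $\mathcal{N}(G)$ is generated by at most $N(\Aut(\mathcal{N}_{Z/X}\to Z))$ elements; pulling these back along $\mathcal{N}^{-1}$ gives the same bounds for $G$, with constants independent of $G$. Thus $\Aut(X,Z)$ is strongly Jordan.

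The only subtle point I anticipate is not in the logic of this argument, which is quite direct, but in making sure the hypotheses of Lemma \ref{lem-copy} genuinely apply: that lemma requires $Z$ to be a connected complex \emph{submanifold} and $X$ merely a connected complex manifold, with no K\"aherity assumption, so it is available here, and the K\"ahler hypothesis on $Z$ is used \emph{only} to feed Theorem \ref{thm-jordan-bundle}. The main obstacle in the broader program (addressed in the next section rather than here) is producing such a K\"ahler submanifold $Z$ inside $X$ in the first place via the non-K\"ahler locus of a big class, but for this theorem $Z$ is given, so the proof is essentially the two-line composition above.
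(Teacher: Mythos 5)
Your proposal is correct and follows exactly the paper's own proof: embed every finite subgroup of $\Aut(X,Z)$ into $\Aut(\mathcal{N}_{Z/X}\to Z)$ via Lemma \ref{lem-copy}, then apply Theorem \ref{thm-jordan-bundle} to that bundle automorphism group and transport the uniform constants back. The paper's version is just terser; your explicit check that the strongly Jordan constants pass through the isomorphism is the same (implicit) final step.
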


\begin{proof}
By Lemma \ref{lem-copy}, $\Aut(\mathcal{N}_{Z/X}\to Z)$ contains an isomorphic copy of every finite subgroup of $\Aut(X,Z)$.
By Theorem \ref{thm-jordan-bundle}, $\Aut(\mathcal{N}_{Z/X}\to Z)$ is strongly Jordan.
The theorem follows.
\end{proof}

\begin{remark}\label{rem_Q3}
Together with \cite[Theorem 1.6]{PS}, a positive answer to Question \ref{Q3} for compact complex surfaces will deduce the (strongly) Jordan property of $\Aut(V)$ for every compact complex threefold $V$ with $\Sing(V)\neq \emptyset$.
Indeed, just take an equivariant log resolution and apply the same proof of Theorem \ref{thm-jordan-Z} for $X$ being any (smooth) exceptional prime divisor which is a compact complex surface.
\end{remark}

Note that the $\partial\bar{\partial}$-lemma holds for compact complex manifolds in Fujiki's class $\mathcal{C}$.
So it is free for us to use the equivalent Bott-Chern ($\partial\bar{\partial}$),  Dolbeault ($\bar{\partial}$) and De Rham ($d$) cohomologies.
Moreover, Hodge decomposition holds true.
We refer to \cite[Lemma 5.15 and Proposition 5.17]{DGMS75} and \cite[Proposition 1.6 and Corollary 1.7]{Fuj78} for the details.

Now we are ready for:

\begin{proof}[Proof of Theorem \ref{main-thm}]
We take the reduced structure of $Z$.
We first let $G:=\Aut(X,Z)$.
To show $G$ is strongly Jordan,
we shrink $Z$ and $G$ by running the Main Program several times such that the final $Z$ is a $G$-invariant non-empty connected compact K\"ahler submanifold of $X$.
Note that Fujiki's class $\mathcal{C}$ is closed under taking closed subspaces and compact meromorphic images.

\vskip 1pc \noindent
\textbf{Main Program.}

If $Z$ is smooth connected compact K\"ahler, then we stop.

If $Z$ is not connected, we run Step A and restart the Main Program; else we continue.

If $Z$ is singular, we run Step B and restart the Main Program; else we continue.

Else: assume that $Z$ is smooth connected compact in $\mathcal{C}$ but not K\"ahler.
Let $$G_{\tau}:=\{g\in G\,|\, (g|_Z)^*|_{H^2(Z,\Q)}=\id\}.$$
By
the Hodge decomposition
(which still exists for those in
$\mathcal{C}$), $G_{\tau}$ acts trivially, via pullback, on $H^{1,1}_{\partial\bar{\partial}}(Z,\R)$.
Since $Z\in \mathcal{C}$, there is a big real $(1,1)$-class $[\alpha]\in H^{1,1}_{\partial\bar{\partial}}(Z,\R)$ by \cite[Theorem 0.7]{DP04}.
Note that its non-K\"ahler locus $E_{nK}(\alpha)$ is a $G_{\tau}$-invariant non-empty closed analytic subset of $Z$ with $\dim(E_{nK}(\alpha))<\dim(Z)$ (cf.~\cite[\S2.4]{Tos18}).
We replace $Z$ by $E_{nK}(\alpha)$ (with reduced structure) and $G$ by $\Aut(X, E_{nK}(\alpha))$.
Since $G_{\tau} \subseteq \Aut(X, E_{nK}(\alpha))$, if $\Aut(X, E_{nK}(\alpha))$ is strongly Jordan, so is $G_\tau$
and, by noting that $G/G_{\tau}$ has bounded finite subgroups (cf.~Lemma \ref{lem-exact} and Theorem \ref{thm-Mink}), also $G$ is strongly Jordan.
Then we restart the Main Program.

\vskip 1pc \noindent
\textbf{Step A.}
We replace $G$ by its 
subgroup of finite index such that $G$ fixes all (finitely many) connected components of $Z$.
We replace $Z$ by one of its connected component (with reduced structure).

\vskip 1pc \noindent
\textbf{Step B.}
If $Z$ is singular, then its singular locus $\Sing(Z)$ is a $G$-invariant non-empty closed analytic subset of $Z$ with $\dim(\Sing(Z))<\dim(Z)$.
We then replace only $Z$ by $\Sing(Z)$ (with reduced structure).

\vskip 1pc \noindent
\textbf{End of the proof:}

Note that our finitely many replacements of $G$ always fit the assumption in Lemma \ref{lem-exact}.
So it suffices to show that the finally chosen $G$ is still strongly Jordan.
By Theorem \ref{thm-jordan-Z}, $\Aut(X,Z)$ is strongly Jordan, because our finally chosen $Z$ is non-empty, smooth, and compact K\"ahler.
Since our current $G$ is contained in $\Aut(X, Z)$, Theorem \ref{main-thm} follows.
\end{proof}

\end{document}